\newtheorem{teorema}{Theorem}[section]
\newtheorem{lema}[teorema]{Lemma}
\newtheorem{definicion}[teorema]{Definition}
\theoremstyle{definition}
\newtheorem{ejemplo}[teorema]{Example}
\newtheorem{remark}[teorema]{Remark}
\newcommand{\Qed}{~\hfill$\square$}
\begin{document}

\title{Principal configurations around umbilics of spacelike surfaces in null hypersurfaces of $\mathbb{R}_1^4$}
\date{July 2nd, 2019}
\maketitle

\begin{center}
\author{Matias Navarro\footnote{matias.navarro@correo.uady.mx}, Oscar Palmas\footnote{oscar.palmas@ciencias.unam.mx}, Didier A. Solis\footnote{didier.solis@correo.uady.mx}}
\end{center}

\begin{abstract}
\noindent We study the principal configurations around an isolated $\eta$-umbilical point on a generic spacelike surface $S$ immersed in a null hypersurface $M$ of Minkowski space $\mathbb{R}_1^4$ relative to a well-defined null vector field $\eta$ orthogonal to the surface $S$. In the particular case of $M$ being a null rotation hypersurface of $\mathbb{R}_1^4$ we also recover the local Darbouxian principal configurations around that kind of $\eta$-umbilical points.
\end{abstract}

\noindent {\em Keywords:} Principal configurations, Null rotation hypersurfaces, Darbouxian umbilics.

\noindent {\bf MSC[2010]:} 53B30, 34C40.

\section{Introduction}\label{sec:intro}

The principal configuration of a  surface consists of a pair of foliations formed by the two families of principal curvature lines corresponding to maximal and minimal principal curvatures, being the umbilics of the surface the singular points of the foliation. In the pioneer works \cite{MR724448, MR985996} a generic class of principal configurations around isolated umbilical points of surfaces in $\mathbb{R}^{3}$ was analyzed, named {\em Darbouxian} in \cite{MR724448} in honor to G. Darboux \cite{MR1896darboux} who found in 1896 the three topologically different types which belong to that class. Since the publication of \cite{MR724448}, several authors have produced a considerable amount of research with generalizations and extensions of the results to surfaces in other ambient spaces, such as Euclidean and semi-Euclidean spaces. See, for instance \cite{MR2747949, MR961601, MR1473078, MR2105778, MR2673934, MR2080424, MR3126943, MR1900745, MR1336208} just to mention a few. In particular, P. Bayard and F. S\'anchez-Bringas study in \cite{MR2747949} the principal configurations for spacelike surfaces in Minkowski 4-space and they find the 1-jet of the differential equation of the principal curvature lines with respect to a null normal vector field. They also observe that Darbouxian principal configurations must appear generically, but they did not show the explicit dependence of these configurations on the parameters of the immersion. Here we impose the additional condition of $S$ being immersed in a null hypersurface $M\subset\mathbb{R}_1^4$ and find the coefficients, in terms of parameters of the immersion, of the differential equation of the $\eta$-principal curvature lines, where $\eta$ is a well-defined null vector field normal to $S$ and complementary to the tangent bundle of $M$ in the tangent bundle of $\mathbb{R}_1^4$. In section \ref{sec:prelim} we give the necessary preliminary concepts on the geometry of a pair $(M,S)$ formed by a null hypersurface $M^{n+1}$ of $\mathbb{R}_1^{n+2}$ and a spacelike submanifold $S$ of dimension $n$ immersed in $M$, by means of two shape operators defined by independent null vector fields $\eta$ and $\xi$ orthogonal to $S$, following the approach in \cite{MR1313822}. If the null hypersurface $M$ is the light cone $\Lambda^{n+1}$ then any hypersurface $S$ of $M$ is totally umbilical with respect to $\xi$ and the $\xi$-principal configuration is trivial. For the $\eta$-shape operator, it was shown by the authors in \cite{MR3126943} the existence of non-totally umbilical spacelike surfaces $S\subset \Lambda^3 \subset \mathbb{R}_1^4$ and therefore it makes sense to study the $\eta$-principal curvature lines around isolated $\eta$-umbilical points of spacelike surfaces immersed in $\Lambda^3 \subset \mathbb{R}_1^4$. Also in \cite{MR3126943} the authors obtained the explicit dependence of the coefficients of the differential equation of these curvature lines on the immersion. In section \ref{sec:null} we generalize this framework for any null hypersurface $M\subset \mathbb{R}_1^4$. Finally, in section \ref{sec:nullrotation} we classify the local Darbouxian principal configurations in a neighbourhood of isolated $\eta$-umbilical points of spacelike surfaces immersed in all possible null rotation hypersurfaces of $\mathbb{R}_1^4$ (see Theorem \ref{teo:Darbouxianos}). 

\section{Preliminaries}\label{sec:prelim}

In this section we follow closely the notation and results in \cite{MR3126943}. The \emph{Minkowski $(n+2)$-space} $\mathbb{R}_{1}^{n+2}$ is the $(n+2)$-dimensional vector space $\mathbb{R}^{n+2}$ endowed with the scalar product
\begin{equation}\label{metric}
\langle p,q \rangle= -u_{0}v_{0}+\sum_{i=1}^{n+1} u_{i}v_{i},
\end{equation}
where $(u_{0},u_{1},\dots,u_{n+1})$ and $(v_{0},v_{1},\dots,v_{n+1})$ are
respectively the coordinates of $p$ and $q$ relative to the canonical basis $e_0,e_1,\dots,e_{n+1}$ of $\mathbb{R}^{n+2}$. 

Throughout this work $M$ will denote a \emph{null} (or \emph{lightlike})
hypersurface of $\mathbb{R}_{1}^{n+2}$, that is, a $(n+1)$-submanifold such that
the restriction of the scalar product (\ref{metric}) to the tangent bundle $TM $ is degenerate. 
This degeneracy condition is equivalent to the existence of a vector field $\xi\in\Gamma(TM)$ everywhere different from zero such that $\langle\xi,X\rangle =0$ for each $X\in\Gamma(TM)$.

The $(n+1)$-dimensional \emph{light cone} of $\mathbb{R}_{1}^{n+2}$ is the null hypersurface defined by
\begin{equation*}
\Lambda^{n+1}=\{\ p\in\mathbb{R}_{1}^{n+2}\ \vert\ \langle p, p\rangle=0, p\ne0\ \}.
\end{equation*}

Given a null hypersurface $M\subset\mathbb{R}_1^{n+2}$, we will consider a
\emph{spacelike} hypersurface $S$ of $M$, that is, $\dim S=n$ and the
restriction of the scalar product (\ref{metric}) to the tangent bundle $TS$ is positive definite.

In order to define the basic geometrical objects related to the pair $(M,S)$, we will split the tangent bundle $T\mathbb{R}_1^{n+2}$ into three vector bundles. From \cite{MR1313822}, we know that for each point $p\in S$ there exists a neighbourhood $\mathcal{U}$ in $S$ and a unique vector field 
$\eta \in \Gamma(T\mathbb{R}_1^{n+2}\vert _{\mathcal{U}})$ such that
\begin{equation} \label{eta}
\langle\xi, \eta\rangle=1, \quad \langle\eta,\eta\rangle=\langle\eta,X\rangle=0,
\end{equation}
for each $X\in\Gamma(TS\vert _{\mathcal{U}})$. Using this null vector field $\eta$
we can write, for each $p\in \mathcal{U}$, the \emph{transversal} decomposition
\begin{equation}  \label{eq:descomposicion0}
T_{p}\mathbb{R}_{1}^{n+2}=T_{p}M\oplus \mathrm{span}(\eta_p).
\end{equation}
Additionally, we decompose $TM$ as
\begin{equation}  \label{eq:descomposicion1}
T_{p}M=T_{p}S\oplus_{\mathrm{orth}} \mathrm{span}(\xi_p),
\end{equation}
so that
\begin{equation*}
T_{p}\mathbb{R}_{1}^{n+2}=T_{p}S\oplus_{\mathrm{orth}}(\mathrm{span}(\xi_{p})\oplus\mathrm{span}(\eta_{p})).
\end{equation*}

Following \cite{MR2598375}, the Gauss-Weingarten formulae corresponding to these decompositions are given using the connections defined as follows. Denote by $\widetilde\nabla$
the semi-Riemannian connection in $\mathbb{R}_{1}^{n+2}$, and let $X,Y\in\Gamma(TM)$. Using (\ref{eq:descomposicion0}) we
write the \emph{first Gauss formula} as
\begin{equation}  \label{eq:gauss1}
\widetilde\nabla_{X}Y=\nabla_{X}Y+h(X,Y),
\end{equation}
where $\nabla$ denotes the induced connection in $M$ and $h$ is called the
\emph{second fundamental form} of $M$ in $\mathbb{R}_1^{n+2}$.

On the other hand, if $X\in\Gamma(TM)$, we use again (\ref{eq:descomposicion0}) to write the \emph{first Weingarten formula}
\begin{equation}  \label{eq:weingarten1}
\widetilde\nabla_{X} \eta=-A_{\eta}X+\nabla_{X}^t \eta,
\end{equation}
where $A_\eta$ is the \emph{shape operator} with respect to $\eta$ and $\nabla^t$ is the induced
transversal connection of $M$ in $\mathbb{R}_{1}^{n+2}$.

Let $P:TM\to TS$ be the orthogonal projection relative to the decomposition (\ref{eq:descomposicion1}). The \emph{second Gauss-Weingarten formulae} are
\begin{equation}  \label{eq:gauss2}
\nabla_{X}PY=\nabla_{X}^*PY+h^*(X,PY)
\end{equation}
and
\begin{equation}  \label{eq:weingarten2}
\nabla_{X} \xi=-A_{\xi}^*X+\nabla_{X}^{*t} \xi,
\end{equation}
for $X,Y\in\Gamma(TM)$, where $\nabla^*$ and $\nabla^{*t}$ are linear
connections which will not be used here; we are interested only on the
operator $A_\xi^*$ and the form $h^*$, which are called the \emph{screen
shape operator} and the \emph{screen second fundamental form}, respectively.
It is easy to check that
\begin{equation}  \label{eq:2ff}
\langle h^*(X,PY),\eta \rangle = \langle A_\eta X, PY \rangle;
\end{equation}
compare for example, equations (2.1.21) and (2.1.26) in \cite{MR2598375}.

\begin{definicion}
\label{def:ptoumbilico} Given a normal vector field $\nu$ to the surface $S$, we say that a point $p\in S$ is \textbf{$\nu$-umbilical} if there exists a real-valued function 
$k$ on $S$ such that $A_{\nu}(X(p))=k(p)X(p)$ for each $X\in \Gamma(TS)$. 
If every point of $S$ is $\nu$-umbilical we say that $S$ is \textbf{totally umbilical} with respect to the normal vector field $\nu$.
\end{definicion}

\begin{ejemplo}
Note that the position vector field $\xi\in\mathbb{R}_{1}^{n+2}$ satisfies $\widetilde \nabla_{X}\xi=X$ for any $X\in\Gamma(T\mathbb{R}_{1}^{n+2})$. In
particular, for \emph{any} hypersurface $S\subset\Lambda^{n+1}$ and any 
$X\in\Gamma(T\Lambda^{n+1})$ we have
\begin{equation*}  \label{eq:xiumbilica}
A_{\xi}^*X=-PX.
\end{equation*}
The last equation implies that for $X\in \Gamma(TS)$ the screen shape operator $A_\xi^*$ is a multiple of the identity and $S$ is totally umbilical with respect to $\xi$. Therefore, there are no isolated $\xi$-umbilical points on any surface $S\subset \Lambda^3 \in \mathbb{R}_1^4$.
\end{ejemplo}

\section{Spacelike surfaces in null hypersurfaces of $\mathbb{R}_1^4$}

\label{sec:null}

As was observed in \cite{MR2673934}, the concept of principal curvature lines is derived from the existence of a self-adjoint operator with respect to a given metric and with real eigenvalues. For $\eta$ defined by (\ref{eta}) in section \ref{sec:prelim}, $A_\eta$ is $\Gamma(TS)$-valued and the distribution $TS$ is integrable; therefore, by Theorem 2.2.6 in \cite{MR2598375} it follows that the shape operator $A_{\eta }$ restricted to $TS$ is self-adjoint. Then, for each $p\in S$ there is an orthonormal basis in $T_{p}S$ of eigenvectors of $A_{\eta }$ with corresponding real eigenvalues, since the metric (\ref{metric}) on a spacelike surface is positive definite. These eigenvalues are called $\eta $-\emph{principal curvatures} at $p$ and, according to the definition of umbilicity given in section \ref{sec:prelim}, a point in $S$ is $\eta $-\emph{umbilical} if both $\eta $-principal curvatures coincide at that point. On the other hand,
for any non-umbilical point there are two $\eta $-\emph{principal directions} $E_1,E_2$
that define two smooth line fields by the equation $A_{\eta }E_i=k_i E_i$, whose
integral lines $L_i$ are called the $\eta $-\emph{principal curvature lines} for $i=1,2$. An isolated $\eta $-umbilical point $p\in S$ together with the two families $L_1, L_2$ of $\eta$-principal curvature lines on the surface $S$ in a neighborhood of $p$ form the {\em local $\eta$-principal configuration} at $p\in S$. Let $\mathcal{U}_{\eta}(S)$ be the set of $\eta$-umbilical points of $S$ and $L_i(S)$ the family of $\eta$-principal curvature lines which correspond to the $\eta$-principal direction $E_i$. The  triple $(\mathcal{U}_{\eta}(S),L_1(S),L_2(S))$ is the {\em $\eta$-principal configuration}. 

As in \cite{MR724448} and \cite{MR2007065}, we say that two of our spacelike surfaces $S_1$ and $S_2$ with $\eta$-principal configurations
\[
(\mathcal{U}_{\eta}(S_1),L_1(S_1),L_2(S_1)) \quad \text{and} \quad (\mathcal{U}_{\eta}(S_2),L_1(S_2),L_2(S_2))
\]
are {\em $\eta$-principally equivalent} if there is a homeomorphism $h:S_1\rightarrow S_2$ such that $h(\mathcal{U}_{\eta}(S_1))=\mathcal{U}_{\eta}(S_2)$ and $h$ sends $\eta$-principal curvature lines $L_i(S_1)$ into $\eta$-principal curvature lines $L_i(S_2)$ respectively, for each $i=1,2$. In the seminal work  \cite{MR0730276} it was proved that every immersion of a compact oriented smooth 2-manifold into $\mathbb{R}^3$ can be arbitrarily $C^2$-approximated by smooth immersions whose principal configurations are stable under $C^3$-sufficiently small perturbations of the immersion in the sense introduced in \cite{MR724448}. On the other hand, the local classification of generic multi-valued direction fields in the plane up to homeomorphism was made by Davydov in \cite{MR0800916} following the approach of Arnold \cite{MR0947141}. In the same line of research, Bruce and Fidal \cite{MR985996} consider those bivalued direction fields whose direction pairs are mutually orthogonal at each point and they give local classification up to homeomorphism of solution curves of binary differential equations of the form
\begin{equation}\label{BDE}
A(x,y) \ dy^2 + 2B(x,y) \ dy \ dx - A(x,y) \ dx^2=0,
\end{equation}
where $A$ and $B$ are smooth functions which vanish at the origin. Besides, they show that the homeomorphism is a diffeomorphism away from certain directions though the origin. As we are going to show in this section, the $\eta$-principal curvature lines of our spacelike surface $S$ satisfy binary differential equations of that kind. The main result of \cite{MR985996} is that, under certain conditions on the 1-jet of the functions $A, B$, there is a germ of a homeomorphism of the $xy$-plane onto itself taking the integral curves of (\ref{BDE}) to the integral curves of one of three normal forms which are principally equivalent to the Darbouxian types described in \cite{MR724448}. There is a considerable overlap of results in both papers but with different blowing-up constructions. See also \cite{MR1328597, MR1449143, MR961601, MR1473078} where there are similar classification results for equations like (\ref{BDE}).

The differential equation of our $\eta$-principal curvature line $c(t)$ is given by
\begin{equation}\label{rodrigues}
A_{\eta }(c^{\prime }(t))=k(t)c^{\prime}(t),
\end{equation}
which may be expressed in local coordinates as follows. Let $S$ be a spacelike surface immersed in a null hypersurface $M$ of Minkowski space $\mathbb{R}_1^4$ and let $\varphi$ be a parametrization of an open neighborhood $\mathcal{U}\subset S$ with local coordinates $(x,y)$. For each $p=\varphi(x,y)$, the associated basis of $T_{p}S$ is given by $\varphi_{x}=\partial\varphi /\partial x$ and $\varphi_{y}=\partial \varphi /\partial y$. In view of equations (\ref{eq:gauss2}) and (\ref{eq:2ff}), the \emph{coefficients of the screen second fundamental form} satisfy
\begin{equation}\label{e}
e_\eta =\langle h^*(\varphi_x, \varphi_x), \eta\rangle= \langle A_\eta (\varphi_x), \varphi_x \rangle= \langle \nabla_{\varphi_x}\varphi_x, \eta \rangle = \langle \varphi_{xx}, \eta \rangle,
\end{equation}
\begin{equation}\label{f}
f_\eta =\langle h^*(\varphi_x, \varphi_y), \eta\rangle= \langle A_\eta (\varphi_x), \varphi_y \rangle=\langle \nabla_{\varphi_x}\varphi_y, \eta \rangle = \langle \varphi_{xy}, \eta \rangle,
\end{equation}
\begin{equation}\label{g}
g_\eta =\langle h^*(\varphi_y, \varphi_y), \eta\rangle= \langle A_\eta (\varphi_y), \varphi_y \rangle=\langle \nabla_{\varphi_y}\varphi_y, \eta \rangle = \langle \varphi_{yy}, \eta \rangle . 
\end{equation}

\noindent On the other hand, because $A_\eta(X)\in T_pS$ for each $X\in T_pS$, there exist functions $a_{ij}$ such that
\begin{equation*}
A_\eta(\varphi_x)=a_{11}\varphi_x+a_{21}\varphi_y, \ \ A_\eta(\varphi_y)=a_{12}\varphi_x+a_{22}\varphi_y,
\end{equation*}
which imply that the coefficients (\ref{e}), (\ref{f}), (\ref{g}) satisfy
\begin{eqnarray*}
e_\eta &=&\langle A_\eta(\varphi_x),\varphi_x \rangle = a_{11}E+a_{21}F, \\
f_\eta &=&\langle A_\eta(\varphi_x),\varphi_y \rangle = a_{11}F+a_{21}G, \\
f_\eta &=&\langle A_\eta(\varphi_y),\varphi_x \rangle = a_{12}E+a_{22}F, \\
g_\eta &=&\langle A_\eta(\varphi_y),\varphi_y \rangle = a_{12}F+a_{22}G,
\end{eqnarray*}
where
\begin{equation}\label{EFG}
E=\langle \varphi_x, \varphi_x \rangle, \ \ F=\langle \varphi_x, \varphi_y \rangle, \ \ G=\langle \varphi_y, \varphi_y \rangle,
\end{equation}
are the coefficients of the {\it first fundamental form} of the immersion. The functions $a_{ij}$ can be obtained in terms of $E,F,G,e_\eta,f_\eta,g_\eta$ from the above system. 
Then, if $c(t)=\varphi(x(t),y(t))$ satisfies (\ref{rodrigues}) its coordinate functions are solutions of the system 
\begin{eqnarray*}
a_{11} x' + a_{12} y'&=&k x', \\
a_{21} x' + a_{22} y'&=&k y'.
\end{eqnarray*}
Eliminating the parameter $k$ from the above system, the equation (\ref{rodrigues}) is written in local coordinates $(x,y)$ of the open subset $\varphi^{-1}({\mathcal{U}})\subset \mathbb{R}^2$ as
\begin{equation}\label{ABC}
a_{12}(y')^2+(a_{11}-a_{22})x'y'-a_{21}(x')^2=0,
\end{equation}
where
\begin{eqnarray*}
a_{12}&=&f_{\eta }G-g_{\eta }F, \\
a_{11}-a_{22}&=&e_{\eta }G-g_{\eta }E, \\
a_{21}&=&f_{\eta }E-e_{\eta }F.
\end{eqnarray*}
In a more usual notation of several authors (see \cite{MR724448}, \cite{MR985996}, \cite{MR1473078}) equations like (\ref{ABC}) are written in the form
\[
A(x,y) \ dy^2 + B(x,y) \ dx \ dy + C(x,y) \ dx^2 = 0,
\]
which we adopt from now on.

Now, in order to build a parametrization of a null hypersurface $M$ of $\mathbb{R}_1^4$, we consider a smooth function $f:U\subset \mathbb{R}^{2}\rightarrow \mathbb{R}$ where $U$ is a neighborhood of the origin and let
\begin{equation*}
N=\frac{1}{\sqrt{1+f_{x}^{2}+f_{y}^{2}}}\left( -f_{x},-f_{y},1\right)
\end{equation*}
be a unit normal vector to the graph of $f$ in $\mathbb{R}^{3}$; we use this to define a null vector of 
$\mathbb{R}_1^4$ by
\begin{equation*}
\xi (x,y)=(1,N(x,y)).
\end{equation*}
Then a local parametrization of a null hypersurface $M\subset \mathbb{R}_1^4$ is given by
\begin{equation}\label{parametrizacionM}
\varphi (x,y,t)=(1,x,y,f(x,y))+t\,\xi (x,y).
\end{equation}
Taking $t$ as a smooth germ of a non constant function $g:U\subset \mathbb{R}^2\rightarrow \mathbb{R}$ we obtain a generic surface $S$ contained in $M$ and parametrized by
\begin{equation}\label{parametrizacionS}
\Phi (x,y)=(1,x,y,f(x,y))+g(x,y)\,\xi (x,y),
\end{equation}
with $g(0,0)=g_x(0,0)=g_y(0,0)=0$. Now, let $p=\Phi(0,0)$ be an isolated $\eta$-umbilical point of $S$ for the vector field $\eta$ defined by (\ref{eta}) in section \ref{sec:prelim}. 
We note that ${\Phi_x,\Phi_y}$ form an orthonormal basis of $T_pS$ and without loss of generality we can assume that the tangent plane to the graph of $f$ at $(0,0,f(0,0))$ in $\mathbb{R}^3$ coincides with the $(x,y)$-plane, i.e.
\begin{equation*}
f_x(0,0)=f_y(0,0)=0.
\end{equation*}
Then, at the origin we have
\begin{center}
$\det
\left( 
\begin{array}{crl}
E &F \\
F &G \\
\end{array}
\right)
=1,
$
\end{center}
which implies that $\Phi$ defines an immersion of a neighborhood $U\subset \mathbb{R}^2$ of the origin  into an spacelike surface $S=\Phi(U)\subset M\subset \mathbb{R}_1^4$. 

Now we notice that the 1-jet of the vector field $\eta$ together with the 3-jet of the local parametrization $\Phi$ determine generically the differential equation of $\eta$-principal curvature lines. The proof goes in exactly the same way as the proof of Lemma 2.4 in \cite{MR1336208}. However, the calculations needed to find the 1-jet of our normal vector field $\eta$ and the $3$-jet of our parametrization (\ref{parametrizacionS}) are much more elaborate, as we are going to show in the following.

\begin{lema}\label{lema:1jetnormal}
The 1-jet of the vector field $\eta$ is given by
\begin{equation}\label{1jetnormal}
(-1/2,-kx,-ky,1/2),
\end{equation}
where $k$ is the $\eta$-principal curvature at $p=\Phi(0,0)$.
\end{lema}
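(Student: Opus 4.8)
The plan is to compute $\eta$ directly from its defining equations \eqref{eta} by expanding to first order in $(x,y)$, using the parametrizations \eqref{parametrizacionM}–\eqref{parametrizacionS} together with the normalizations $f_x(0,0)=f_y(0,0)=0$ and $g(0,0)=g_x(0,0)=g_y(0,0)=0$. First I would write $\eta(x,y)=(\eta_0,\eta_1,\eta_2,\eta_3)$ as an unknown vector field along $S$ and impose the four scalar conditions: $\langle\eta,\eta\rangle=0$, $\langle\eta,\Phi_x\rangle=0$, $\langle\eta,\Phi_y\rangle=0$, and $\langle\eta,\xi\rangle=1$. This is a (nonlinear, because of the first equation) system whose solution is unique by the cited result from \cite{MR1313822}; the strategy is to solve it order by order, first at the origin and then for the linear part.

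At the origin, $\xi(0,0)=(1,0,0,1)$ and $\Phi(0,0)=(1,0,0,f(0,0))$, and a short computation gives $\Phi_x(0,0)=(0,1,0,0)+O(\text{corrections that vanish})$, $\Phi_y(0,0)=(0,0,1,0)$ — using $f_x=f_y=g=g_x=g_y=0$ at $0$. Then $\eta(0,0)$ is orthogonal to $e_1$ and $e_2$, null, and pairs to $1$ with $(1,0,0,1)$; this forces $\eta(0,0)=(-1/2,0,0,1/2)$, which matches the constant term of \eqref{1jetnormal}. For the linear part, I would differentiate each of the four defining identities in $x$ and in $y$, evaluate at the origin, and substitute the already-known zeroth-order data plus the second derivatives of $f$ and $g$ at $0$. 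Differentiating $\langle\eta,\eta\rangle=0$ gives $\langle\eta_x,\eta\rangle=0$ at $0$, i.e. $\eta_x(0)$ is $\langle\cdot,\cdot\rangle$-orthogonal to $(-1/2,0,0,1/2)$; differentiating $\langle\eta,\xi\rangle=1$ gives $\langle\eta_x,\xi\rangle+\langle\eta,\xi_x\rangle=0$; and differentiating $\langle\eta,\Phi_x\rangle=0$, $\langle\eta,\Phi_y\rangle=0$ gives relations involving $\eta_x$, $\Phi_{xx}$, $\Phi_{xy}$, etc. Together these four linear equations determine $\eta_x(0)$ (and similarly $\eta_y(0)$) uniquely.

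The key input that produces the coefficient $-k$ is the $\eta$-umbilicity hypothesis at $p$: by \eqref{e}–\eqref{g} the screen second fundamental form coefficients at $0$ are $e_\eta=\langle\Phi_{xx},\eta\rangle$, $f_\eta=\langle\Phi_{xy},\eta\rangle$, $g_\eta=\langle\Phi_{yy},\eta\rangle$, and since $\{\Phi_x,\Phi_y\}$ is orthonormal at $p$ the umbilic condition $A_\eta=k\,\mathrm{Id}$ reads $e_\eta(0)=g_\eta(0)=k$, $f_\eta(0)=0$. These relations, fed into the differentiated equations $\langle\eta_x,\Phi_x\rangle + \langle\eta,\Phi_{xx}\rangle=0$ and $\langle\eta_y,\Phi_x\rangle+\langle\eta,\Phi_{xy}\rangle=0$ (and their $y$-analogues), are exactly what pin the first-order coefficients of the middle two slots of $\eta$ to $-kx$ and $-ky$; the first and last slots pick up no linear term because the corresponding constraints (nullity and pairing with $\xi$, together with $f_x=f_y=0$) kill them. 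Assembling the zeroth- and first-order data yields \eqref{1jetnormal}.

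The main obstacle I anticipate is purely bookkeeping: correctly expanding $\xi(x,y)=(1,N(x,y))$ and $\Phi(x,y)$ to the needed order, keeping track of which terms survive under the normalizations, and being careful that the Lorentzian signature in \eqref{metric} is applied consistently in every inner product (it is easy to drop a sign in the $e_0$-component). A secondary subtlety is justifying that solving the defining equations "order by order" indeed recovers the 1-jet of the genuinely unique $\eta$ guaranteed by \cite{MR1313822}; this follows because the uniqueness is pointwise in a neighborhood, so the Taylor coefficients of the unique solution are forced by the Taylor-expanded equations. Everything else is a finite linear-algebra computation with the explicit low-order derivatives of $f$ and $g$.
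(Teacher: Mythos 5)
Your proposal is correct and follows essentially the same route as the paper: determine $\eta(0,0)=(-1/2,0,0,1/2)$ from the four defining conditions at the origin, then fix the linear part by imposing those same conditions to first order (your differentiation of the identities is equivalent to the paper's substitution of a linear ansatz $j^1\eta$ into the conditions against $j^1\xi$ and the derivatives of $j^3\Phi$), with the umbilicity relations $e_\eta(0)=g_\eta(0)=k$, $f_\eta(0)=0$ supplying the coefficient $-k$. The only cosmetic difference is that the paper first converts umbilicity into constraints on $f_{xx},f_{xy},f_{yy}$ at the origin and writes out the full 3-jets of the components of $\Phi$ before solving for the jet coefficients, whereas you use the second-fundamental-form coefficients directly.
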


\begin{proof} By imposing the conditions
\begin{equation*}
\langle \eta_0, \xi \rangle=1, \ \ \langle \eta_0, \eta_0 \rangle=0, \ \ \langle \eta_0, \Phi_x\rangle = \langle \eta_0, \Phi_y\rangle =0
\end{equation*}
at $(0,0)$ we obtain the null normal vector $\eta _0=(-1/2,0,0,1/2)$. Now, the 3-jet of the function $g(x,y)$ can be written, after an adequate rotation of the $(x,y)$-plane, as
\begin{equation*}
j^3g(x,y)=\frac{1}{2}(g_{xx}(0,0)x^2 + g_{yy}(0,0)y^2)+\frac{\delta}{6}x^3+\frac{\epsilon}{2}x^2y+\frac{\zeta}{2}xy^2+\frac{\lambda}{6}y^3.
\end{equation*}
The $\eta$-umbilicity at $p=\Phi(0,0)$ implies that the $\eta$-principal curvature $k$ at $p$ coincides with the coefficients $e_\eta(0,0)$ and $g_\eta(0,0)$ of the screen second fundamental form relative to $\eta$ at the origin. Besides, we have $f_\eta(0,0)=0$. On the other hand,
\begin{eqnarray*}
e_\eta(0,0)&=&\langle \Phi_{xx}(0,0),\eta_0\rangle=\frac{1}{2}f_{xx}(0,0)+g_{xx}(0,0), \\
f_\eta(0,0)&=&\langle \Phi_{xy}(0,0),\eta_0\rangle=\frac{1}{2}f_{xy}(0,0), \\
g_\eta(0,0)&=&\langle \Phi_{yy}(0,0),\eta_0\rangle=\frac{1}{2}f_{yy}(0,0)+g_{yy}(0,0).
\end{eqnarray*}
Consequently, we write the second order derivatives of $f(x,y)$ at the origin as
\begin{eqnarray*}
f_{xx}(0,0) &=&2(k-g_{xx}(0,0)), \\
f_{xy}(0,0) &=&0, \\
f_{yy}(0,0)&=&2(k-g_{yy}(0,0)).
\end{eqnarray*}
The 3-jet of $f(x,y)$ at the origin becomes
\begin{equation*}
j^3f(x,y)=f(0,0)+k(x^2+y^2)-g_{xx}(0,0)x^2-g_{yy}(0,0)y^2+\frac{a}{6}x^3+\frac{d}{2}x^2y+\frac{b}{2}xy^2+\frac{c}{6}y^3.
\end{equation*}
With these 3-jets of $f$ and $g$ we obtain the following 3-jets of the components of our parametrization (\ref{parametrizacionS}):
\begin{eqnarray*}
j^3\Phi_1(x,y)&=&1+g_{xx}(0,0)x^2/2+g_{yy}(0,0)y^2/2+\delta x^3/6+\epsilon x^2y/2\\[0.2cm]
                         & &\ \ \ \ \ \ \ \ \ \ \ \ \ \ \ \ \ \ \ \ \ \ \ \ \ \ \ \ \ \ \ \ \ \ \ \ \ \ \ \ \ \ \ \ \ \ \ \ \ \ \ \ \ +\zeta xy^2/2+\lambda y^3/6, \\[0.2cm]
j^3\Phi_2(x,y)&=&x+g_{xx}(0,0)(g_{xx}(0,0)-k)x^3+g_{yy}(0,0)(g_{xx}(0,0)-k)xy^2,\\[0.4cm]
j^3\Phi_3(x,y)&=&y+g_{xx}(0,0)(g_{yy}(0,0)-k)x^2y+g_{yy}(0,0)(g_{yy}(0,0)-k)y^3,\\[0.4cm]
j^3\Phi_4(x,y)&=&(k-g_{xx}(0,0)/2)x^2+(k-g_{yy}(0,0)/2)y^2+(a+\delta)x^3/6\\[0.2cm]
                  &  &\ \ \ \ \ \ \ \ \ \ \ \ \ \ \ \ \ \ +(d+\epsilon)x^2y/2+(b+\zeta)xy^2/2+(c+\lambda)y^3/6.
\end{eqnarray*}
Now, in order to find the 1-jet of a null vector field which coincides with $\eta_0$ at $(0,0)$ and is orthogonal to the 3-jet of (\ref{parametrizacionS}) in a neighbourhood of $p$, we introduce
\begin{equation*}
j^1\eta(x,y)=(-1/2+m_1 x + n_1 y, \ \ m_2 x + n_2 y, \ \ m_3 x + n_3 y, \ \ 1/2+m_4 x + n_4 y) 
\end{equation*}
into the conditions
\begin{equation*}
\langle j^1\eta, j^1\xi \rangle=1, \quad \langle j^1\eta, j^1\eta \rangle=\left \langle j^1\eta, \dfrac{\partial(j^3\Phi)}{\partial x}\right \rangle = \left \langle j^1\eta, \dfrac{\partial(j^3\Phi)}{\partial y}\right \rangle =0
\end{equation*}
obtaining a system of equations for the parameters $m_i,n_i$, $i=1,2,3,4$, whose solution, after long but straightforward computations, gives us the 1-jet (\ref{1jetnormal}) for $\eta(x,y)$. \end{proof}

\begin{teorema}\label{teo:ecuadif}
The differential equation of $\eta$-principal curvature lines, using the previous notations and assumptions, has 1-jet given by
\begin{equation}\label{unojetABC}
(a_{1}x+a_{2}y)\,dy^{2}+(b_{1}x+b_{2}y)\,dx\,dy-(a_{1}x+a_{2}y)\,dx^{2}=0,
\end{equation}
where
\begin{equation}
\begin{array}
[c]{l}
a_1=d+2\epsilon,\\ [0.2cm]
a_2=b+2\zeta,\\ [0.2cm]
b_1=a-b+2(\delta-\zeta), \\ [0.2cm]
b_2=d-c+2(\epsilon-\lambda).\label{dependence}
\end{array}
\end{equation}
\end{teorema}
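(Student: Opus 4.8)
The plan is to compute the 1-jet of the three coefficients $A = a_{12} = f_\eta G - g_\eta F$, $B = a_{11}-a_{22} = e_\eta G - g_\eta E$, and $C = -a_{21} = -(f_\eta E - e_\eta F)$ appearing in the binary differential equation (\ref{ABC}), using the 3-jet of $\Phi$ and the 1-jet of $\eta$ obtained in Lemma \ref{lema:1jetnormal}. Since $p=\Phi(0,0)$ is an $\eta$-umbilical point, at the origin we have $e_\eta(0,0)=g_\eta(0,0)=k$, $f_\eta(0,0)=0$, and $E(0,0)=G(0,0)=1$, $F(0,0)=0$; hence $A$, $B$, $C$ all vanish at the origin and their 1-jets are genuinely linear, consistent with the form (\ref{unojetABC}).

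First I would expand each first-fundamental-form coefficient $E,F,G$ to first order using the 3-jets $j^3\Phi_i$: differentiating once in $x$ and $y$ and taking inner products gives $E = 1 + O(x,y)$, etc., and in fact the linear terms of $E,F,G$ will contribute to the 1-jets of $A,B,C$ only through the already-present constant value $k$ of $e_\eta, g_\eta$ at the origin. Simultaneously I would compute the 1-jets of $e_\eta, f_\eta, g_\eta$ by the formulas $e_\eta = \langle \Phi_{xx}, \eta\rangle$, $f_\eta=\langle\Phi_{xy},\eta\rangle$, $g_\eta = \langle\Phi_{yy},\eta\rangle$ from (\ref{e})--(\ref{g}): here the second derivatives of $\Phi$ are read off from the 3-jets (so $\Phi_{xx}, \Phi_{xy}, \Phi_{yy}$ have constant-plus-linear expansions involving the cubic coefficients $a,b,c,d$ of $f$ and $\delta,\epsilon,\zeta,\lambda$ of $g$), and $\eta$ is replaced by its 1-jet $(-1/2,-kx,-ky,1/2)$. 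The product of the linear part of $\Phi_{xx}$ with the constant part $\eta_0=(-1/2,0,0,1/2)$ of $\eta$ produces one batch of linear terms, while the product of the constant part of $\Phi_{xx}$ with the linear part $(-kx,-ky)$ of $\eta$ produces another; the latter is where the principal curvature $k$ reenters, but one must check that these $k$-dependent contributions cancel in the final combinations $A, B, C$ so that only the cubic-jet data survives in (\ref{dependence}).

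Then I would assemble $A = f_\eta G - g_\eta F$, keeping only linear terms: $A \approx (f_\eta)_{\mathrm{lin}} \cdot 1 + 0 - k \cdot (F)_{\mathrm{lin}} - 0$, and similarly for $B$ and $C$; substituting the expansions computed above yields the linear forms $a_1 x + a_2 y$, $b_1 x + b_2 y$, $-(a_1 x + a_2 y)$. The antisymmetry between the $dy^2$ and $dx^2$ coefficients — i.e. that the $dx^2$ coefficient is exactly minus the $dy^2$ coefficient — should emerge automatically from the umbilic normalization $E(0,0)=G(0,0)=1$ together with the structure $C = e_\eta F - f_\eta E$ versus $A = f_\eta G - g_\eta F$ and the fact that at the origin $e_\eta = g_\eta$; I would point this out explicitly as it is what forces the equation into the balanced binary form (\ref{BDE}) studied by Bruce–Fidal. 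Finally, reading off the coefficients of $x$ and $y$ gives the claimed identities $a_1 = d+2\epsilon$, $a_2=b+2\zeta$, $b_1 = a-b+2(\delta-\zeta)$, $b_2 = d-c+2(\epsilon-\lambda)$.

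The main obstacle is purely computational bookkeeping: one must carry the 3-jets of all four components of $\Phi$ through two differentiations, pair them against the 1-jet of $\eta$, expand $E,F,G$ to first order, and then track the numerous terms — in particular the terms proportional to $g_{xx}(0,0)$, $g_{yy}(0,0)$ and $k$ that appear in the intermediate expressions for $e_\eta, f_\eta, g_\eta, E, F, G$ but must cancel out of $A, B, C$, leaving only the third-order coefficients. The conceptual steps are all routine applications of (\ref{e})--(\ref{g}) and (\ref{ABC}); the risk is a sign error or a dropped term, so I would organize the computation by isolating the linear coefficient of $x$ and the linear coefficient of $y$ in each of $A, B, C$ separately and verifying the $dx^2 = -dy^2$ symmetry as an internal consistency check before stating (\ref{dependence}).
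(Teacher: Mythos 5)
Your proposal is correct and follows exactly the route of the paper's own (very terse) proof: compute the 1-jets of $e_\eta,f_\eta,g_\eta$ from the 3-jet of $\Phi$ paired against the 1-jet of $\eta$, expand $E,F,G$, and substitute into (\ref{ABC}). Your worry about $k$-dependent terms resolves even more simply than you anticipate — the constant parts of $\Phi_{xx},\Phi_{xy},\Phi_{yy}$ sit in the first and fourth components while the linear part of $j^1\eta$ sits in the second and third, so those products contribute nothing at first order, and likewise $E,F,G$ have no linear terms at all — but these are details of the same computation, not a different argument.
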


\begin{proof}
From the 1-jet (\ref{1jetnormal}) of $\eta(x,y)$ and the 3-jet $j^3\Phi$ of the immersion 
$\Phi$ obtained in lemma \ref{lema:1jetnormal}, we find the coefficients $e_\eta(x,y)$, $f_\eta(x,y)$, $ g_\eta(x,y)$ of the screen second fundamental form  in the neighborhood $U$. Using this and the coefficients $E(x,y)$, $F(x,y)$, $G(x,y)$ of the first fundamental form of the immersion, equation (\ref{unojetABC}) follows from (\ref{ABC}). \end{proof}

Now, in order to define the Darbouxian principal configurations  for a generic spacelike surface $S$ we follow the standard approach given in \cite{MR724448} and \cite{MR1336208} for surfaces in $\mathbb{R}^3$ and $\mathbb{R}^4$, respectively. See also \cite{MR2007065} and \cite{MR2532372} for more accesible surveys, going from the classical results in $\mathbb{R}^3$ to more recent developments.

For an immersion $\Phi:U\subset \mathbb{R}^2\rightarrow M\subset \mathbb{R}_1^{4}$ of the spacelike surface $S=\Phi(U)$ we consider the projective line bundle $PS$ over $S$ with projection $\pi$.
The two coordinate charts
\begin{equation*}
(x,y;w=dx/dy)  \text{ and }  (x,y;z=dy/dx) 
\end{equation*}
cover $\pi^{-1}(S)$. The equation (\ref{ABC}) defines in $PS$ a surface
\begin{equation}\label{L}
L(\Phi,\eta)=\mathcal{F}^{-1}(0), 
\end{equation}
where $\mathcal{F}:PS\rightarrow\mathbb{R}$ can be written in local coordinates $(x,y,z)$ as
\begin{equation}\label{F}
\mathcal{F}(x,y,z)=A(x,y)\,z^{2}+B(x,y)\,z+C(x,y). 
\end{equation}
Let us denote by $\mathcal{U}_\eta$ the set of $\eta$-umbilical points of
$S$. Away from $\pi^{-1}(\mathcal{U}_\eta)$ the set (\ref{L}) is a regular surface of $PS$. In fact, it is a double covering of $S \setminus \mathcal{U}_\eta$ and our assumption that $p=\Phi(0,0)$ is an isolated $\eta$-umbilical point implies that the projective line $\pi^{-1}(p)$ is contained in $L(\Phi,\eta)$. The common locus of the curves $A(x,y)=0$, $B(x,y)=0$ represents the set $\mathcal{U}_\eta$ in the local coordinates $(x,y)$ of the surface $S$ and because we are interested in isolated umbilical points, we establish the following condition.

\begin{definicion}[Condition ({\bf T})] The pair $(\Phi,\eta)$ satisfies the \textbf{transversality condition} at $p\in \mathcal{U}_\eta$ if the curves $A(x,y)=0$ and $B(x,y)=0$  intersect transversally at 
$\Phi^{-1}(p)$.
\end{definicion}

It is easy to verify that condition ({\bf T}) is equivalent to the regularity of $L(\Phi,\eta)$ along 
$\pi^{-1}(p)$; see remark 1 in section 2 of \cite{MR2105778}.

Denoting by $\mathcal{F}_x$, $\mathcal{F}_y$ and $\mathcal{F}_z$ the partial derivatives of $\mathcal{F}$ with respect to $x,y$ and $z$, the {\em Lie-Cartan} vector field of the pair $(\Phi,\eta)$ is defined in local coordinates $(x,y,z)$ by
\begin{equation} \label{XLieCartan}
\mathcal{X}=\mathcal{F}_{z}\frac{\partial}{\partial x}+z\mathcal{F}_{z}
\frac{\partial}{\partial y} - (\mathcal{F}_{x} + z\mathcal{F}_{y})\frac{\partial}{\partial z}
\end{equation}
and has the following properties:

\begin{description}
\item(i) $\mathcal{X}$ is tangent to $L(\Phi,\eta)$,

\item(ii) $\pi_\ast(\mathcal{X})$ vanishes only at the origin,

\item(iii) if $(x,y,z)\in L(\Phi,\eta)$ with $(x,y)\neq(0,0)$, then
$\pi_{\ast}(\mathcal{X}(x,y,z))$ generates the $\eta$-principal curvature line
with direction $(1,z)$.
\end{description}
These properties of the vector field $\mathcal{X}$ can be used to obtain a blow up of the 
$\eta$-umbilical point with $\mathcal{X}$ tangent to the pull back of the $\eta$-principal curvature lines, which are the solution curves of $\mathcal{X}$ on the surface $L(\Phi,\eta)$ near the singular points of $\mathcal{X}$. 

Observe that, since $\mathcal{F}_z(0,0,z)=0$ for any $z$, the first two components of $\mathcal{X}$ vanish on the fiber $\pi^{-1}(p)$. Therefore, the singularities of the Lie-Cartan vector field $\mathcal{X}$ over $\pi^{-1}(p)$ are the points $(0,0,z)$ where $z$ is a root of the third component of $\mathcal{X}$, namely
\begin{equation}\label{fz}
f(z)=(\mathcal{F}_{x}+ z\mathcal{F}_{y})(0,0,z).
\end{equation}

Using the 1-jets of $A(x,y)$ and $B(x,y)$, the 1-jet of $\mathcal{F}$ is given by
\begin{align*}
\mathcal{F}_1(x,y,z)  &  =((d+2\epsilon)x+(b+2\zeta)y)z^2\nonumber\\
&  +((a-b+2(\delta-\zeta))x+(d-c+2(\epsilon-\lambda))y)z\nonumber\\
&  -(d+2\epsilon)x-(b+2\zeta)y.
\end{align*}
The function (\ref{fz}) with this 1-jet of $\mathcal{F}$ is the cubic polynomial
\begin{equation}\label{cubic}
f_1(z)=(b+2\zeta)z^3+(2d-c+2(2\epsilon-\lambda))z^2+(a-2b+2(\delta-2\zeta))z-d-2\epsilon.
\end{equation}

As was noted in \cite{MR2007065} and \cite{MR1336208} for surfaces in $\mathbb{R}^3$ and 
$\mathbb{R}^4$ respectively, we also have that the roots $z_i$ of the cubic polynomial (\ref{cubic}) determine the possible directions along which the $\eta$-principal curvature lines can approach the umbilic point $p$ in the following sense: the saddle separatrices of the Lie-Cartan vector field $\mathcal{X}$ which are orthogonal to the fiber $\pi^{-1}(p)$ at its singularities $(0,0,z_i)$ project into the surface $S$ to the umbilical separatrices, which are the $\eta$-principal curvature lines approaching the umbilic point $p$ in a definite direction in $T_pS$ defined by $(1,z_i)$. This is due to property (iii) of the Lie-Cartan vector field $\mathcal{X}$ listed above. See figure 1. For another approach to similar problems which have equivalent results see \cite{MR985996}, \cite{MR1328597} and \cite{MR1449143}.

\begin{figure}[h]
\label{fig} \centering
\includegraphics[scale=0.3]{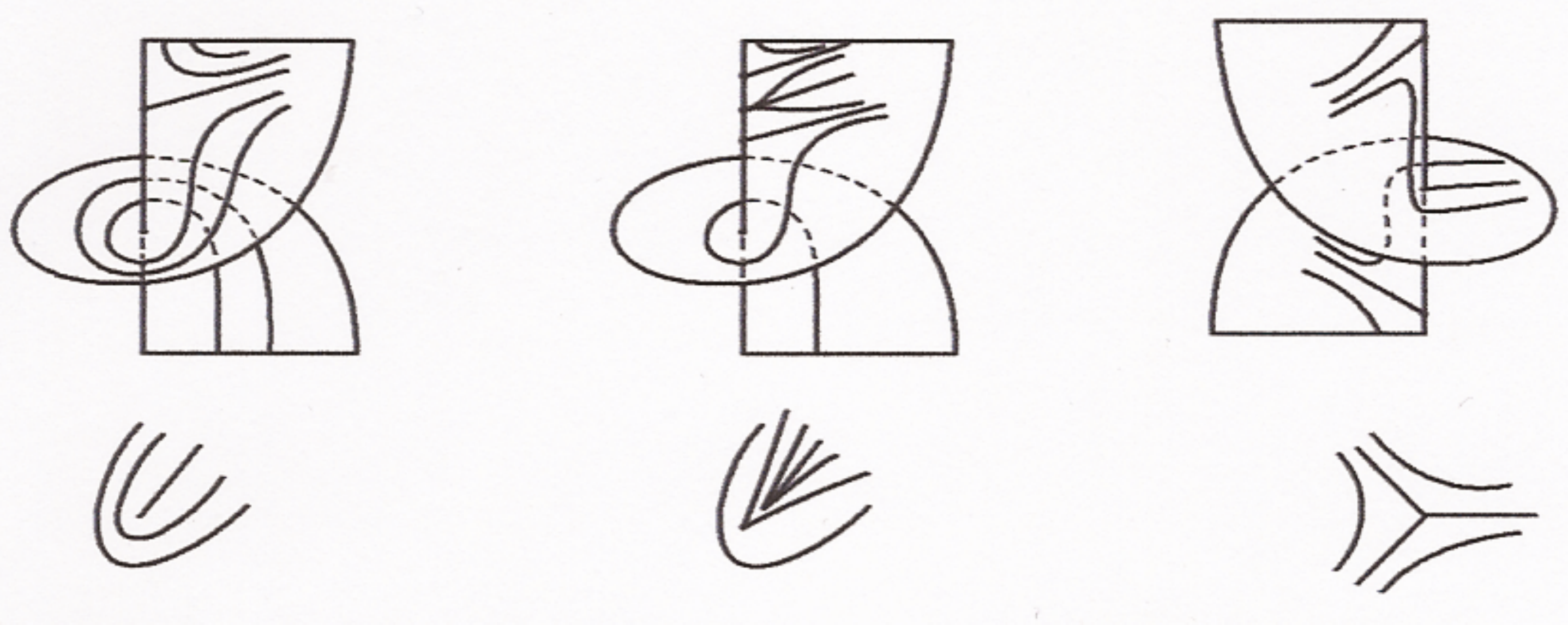}
\caption{Lifted umbilics to singularities of Lie-Cartan vector fields.}
\end{figure}

Notice that the constant term in (\ref{cubic}) may be set equal to zero by a change of coordinates as given, for example, in \cite{MR961601}, however this procedure seems to be impracticable in our context, because of the enormous amount of calculations required. Nevertheless, for null rotation hypersurfaces of $\mathbb{R}_1^4$ as ambient spaces for our surface $S$, we can always obtain explicitly the cubic polynomial (\ref{cubic}); see section \ref{sec:nullrotation}.  Returning to our calculations, we observe that for each root $z_i$ of the cubic polynomial (\ref{cubic}) the eigenvalues of the linear part of $\mathcal{X}$ at the singular point $(0,0,z_i)$ are
\begin{equation}
\begin{array}
[c]{l}
\beta_1(z_i)=0,\\[0.2cm]
\beta_2(z_i)=a-b+2(\delta-\zeta)+(3d-c+2(3\epsilon-\lambda))z_i+2(b+2\zeta)z_i^2,\\[0.2cm]
\beta_3(z_i)=2b-a+2(2\zeta-\delta)+(2(c-2d)+4(\lambda-2\epsilon))z_i-3(b+2\zeta)z_i^2.\\ \label{betas}
\end{array}
\end{equation}
It is easy to see that $(1,z_i,0)$ and $(0,0,1)$ are eigenvectors associated to the non zero eigenvalues (\ref{betas}) and they form a basis of the tangent plane to the surface $L(\Phi,\eta)$ at the singular point $(0,0,z_i)$. Therefore, the type of singularity of $\mathcal{X}$ at each singular point on the projective line $\pi^{-1}(p)$ is determined by $\beta_2(z_i)$ and $\beta_3(z_i)$. 
The {\em simple} $\eta$-umbilical points are defined as follows.

\begin{definicion}\label{simple}
Let $p$ be an isolated $\eta$-umbilical point of the surface $S$ with $\eta$-principal curvature lines around $p$ defined by (\ref{ABC}). In terms of (\ref{parametrizacionS}) and (\ref{1jetnormal}) we call 
$p=\Phi(x,y)$ \textbf{simple} if it is a non degenerate minimum of the function
\begin{equation*}
B^{2}(x,y)-4A(x,y)C(x,y).
\end{equation*}
\end{definicion}

For the differential equation (\ref{unojetABC}) and its dependence on parameters (\ref{dependence}), the simple $\eta$-umbilical points of $S$ are characterized by the following two inequalities:
\begin{equation}
4(d+2\epsilon)^2+(a-b+2(\delta-\zeta))^2>0,\label{simple1}
\end{equation}
\begin{equation}
((d+2\epsilon)(d-c+2(\epsilon-\lambda))-(b+2\zeta)(a-b+2(\delta-\zeta)))^2>0,\label{simple2}
\end{equation}

\begin{remark}\label{condicionT}
In terms of (\ref{unojetABC}), the generic condition $a_1\neq 0$ guarantees that (\ref{simple1}) holds true and conditon \textbf{(T)} is equivalent to $a_1 b_2-a_2 b_1\neq 0$, which imply (\ref{simple2}). Therefore, $a_1\neq 0$ and condition \textbf{(T)} ensure that the $\eta$-umbilical point is simple.
\end{remark}

\begin{definicion}\label{Darbouxian}
Let $p$ be a simple $\eta$-umbilical point of a surface $S$ immersed in a null hypersurface $M\subset \mathbb{R}_1^{4}$ with the cubic polynomial (\ref{cubic}) associated to $(\Phi,\eta)$. Then $p$ is called \textbf{Darbouxian} if (\ref{cubic}) has only simple roots and condition {\bf (T)} holds.
\end{definicion}

Following \cite{MR724448}, the Darbouxian $\eta$-principal configurations are classified into three types depending on the hyperbolicity of the Lie-Cartan vector field (\ref{XLieCartan}) at its singularities over the projective line $\pi^{-1}(p)$ as follows.

\begin{definicion}\label{D1D2D3}
A Darbouxian $\eta$-umbilical point of $(\Phi,\eta)$ is named:
\begin{itemize}
\item[$(\mathbf{D_{1}})$] if the cubic polynomial (\ref{cubic}) has only one real root and the corresponding singularity is a saddle point of $\mathcal{X}$,

\item[$(\mathbf{D_{2}})$] if the cubic polynomial (\ref{cubic}) has three distinct real roots and the corresponding singularities of $\mathcal{X}$ are a unique node between two saddle points,

\item[$(\mathbf{D_{3}})$] if the cubic polynomial (\ref{cubic}) has three distinct real roots and the corresponding singularities of $\mathcal{X}$ are three saddle points.
\end{itemize}
\end{definicion}

These conditions determine the qualitative behavior of the $\eta$-principal configuration around the $\eta$-umbilical point by the projection to the surface $S$ of the phase portraits in $L(\Phi,\eta)$ around these singularities. 
In our case, the conditions for having the Darbouxian principal configurations will be the same as for surfaces in $\mathbb{R}^3$ and $\mathbb{R}^4$. See figures 1 and 2. In figure 1 it is represented only one of the two families of $\eta$-principal curvature lines, following the presentation given in \cite{MR2105778}. 

\begin{figure}[h]
\label{fig} \centering
\includegraphics[scale=0.3]{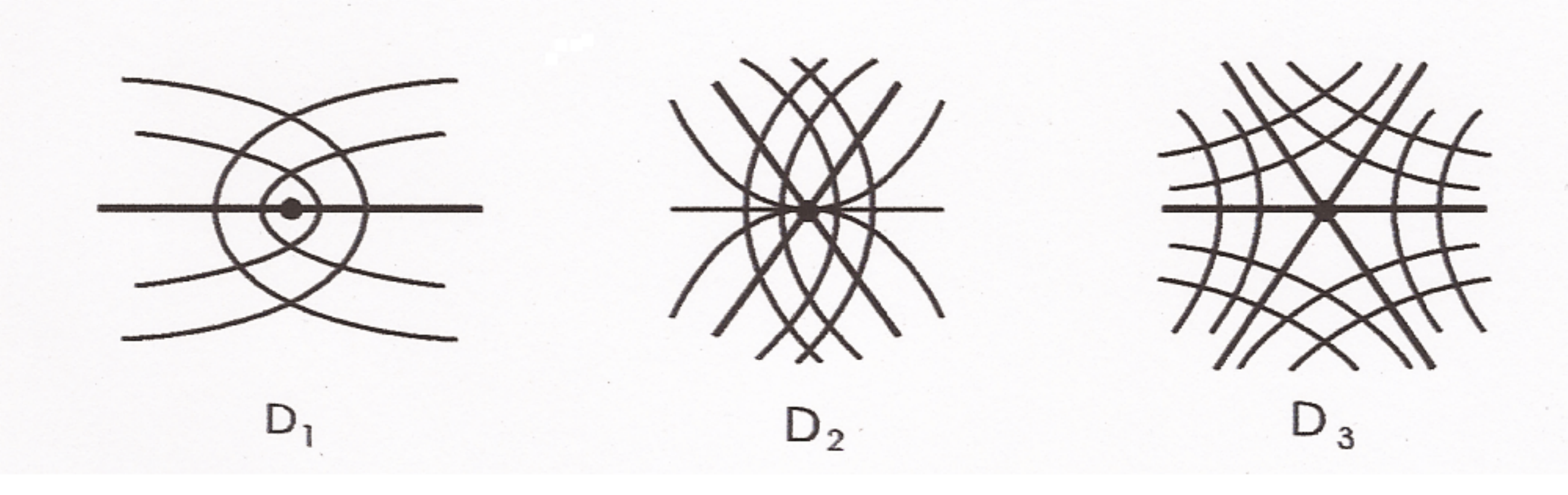}
\caption{Darbouxian principal configurations.}
\end{figure}

In the following section we will show that all three Darbouxian types are realizable for surfaces in null rotation hypersurfaces of the 4-Minkowski space $\mathbb{R}_1^4$.

\section{Spacelike surfaces in null rotation hypersurfaces of
$\mathbb{R}_1^4$.} \label{sec:nullrotation}

The goal of this section is to obtain a classification of the Darbouxian $\eta$-principal configurations for generic spacelike surfaces $S$ immersed in null rotation hypersurfaces of $\mathbb{R}_1^4$. 

The authors showed in \cite{MR3126943} that these null rotation hypersurfaces are classified in only three types, namely, null hyperplanes, 3-dimensional light cones and cylinders over 2-dimensional light cones, so we shall work with each type separately, beginning with the light cone $\Lambda^3$ to complete the study made in the last section of \cite{MR3126943}. 
Our main theorem \ref{teo:Darbouxianos} shows that the explicit dependence of the Darbouxian $\eta$-principal configurations on the parameters of the immersion (\ref{parametrizacionS}), choosing suitable functions $f(x,y)$ and vector fields $\xi$ in the parametrization (\ref{parametrizacionM}) for each type, follows the same rules in all types of null rotation hypersurfaces. 

\begin{teorema} \label{teo:Darbouxianos}
Let $M$ be a null rotation hypersurface of $\mathbb{R}_1^4$ and $S$ an spacelike surface immersed in $M$. Then, generically, an isolated simple $\eta$-umbilic point $p\in S$ which satisfy the transversality condition {\bf (T)} is of Darbouxian type ${\bf D_1, D_2}$ or ${\bf D_3}$ according to the following classification in terms of the parameters of differential equation (\ref{unojetABC}):
\begin{itemize}
\item[$(\mathbf{D_{1}})$] if $a/b>(c/2b)^2+2$ and $a>b>0$ 

or $a/b>(c/2b)^2+2$ and $a<b<0$,

\item[$(\mathbf{D_{2}})$] if $2<a/b<(c/2b)^2+2$ and $a<b<0$; $c>0$, 

or $1<a/b<2$ and $b>0$,

\item[$(\mathbf{D_{3}})$] if $2<a/b<(c/2b)^2+2$ and $a>b>0$; $c\neq 0$,

or $a/b<1$ and $c\neq 0$.
\end{itemize}
\end{teorema}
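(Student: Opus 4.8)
The plan is to verify that, for each of the three model null rotation hypersurfaces (null hyperplanes, the light cone $\Lambda^3$, and cylinders over the $2$-dimensional light cone), one can choose the germs $f(x,y)$ and $\xi(x,y)$ in the parametrization (\ref{parametrizacionM}) so that the generic surface (\ref{parametrizacionS}) realizes each Darbouxian type, and then to reduce the classification to an analysis of the cubic polynomial (\ref{cubic}) together with the two eigenvalues (\ref{betas}). First I would record, for each type of null rotation hypersurface, the explicit form of $\xi$ and of $f$ (so that the $2$-jet normalization $f_x(0,0)=f_y(0,0)=0$, $f_{xy}(0,0)=0$ holds after the rotation of the $(x,y)$-plane used in Lemma~\ref{lema:1jetnormal}), thereby pinning down which of the $3$-jet coefficients $a,b,c,d,\delta,\epsilon,\zeta,\lambda$ are free and which are forced by the ambient geometry. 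The claim embedded in Theorem~\ref{teo:Darbouxianos} — that the dependence "follows the same rules in all types" — should then amount to the assertion that in every case the relevant combinations $a_1=d+2\epsilon$, $a_2=b+2\zeta$, $b_1=a-b+2(\delta-\zeta)$, $b_2=d-c+2(\epsilon-\lambda)$ reduce (after the chosen normalizations, in which $d=\delta=\epsilon=\lambda=0$, say) to $a_1=0$ is excluded by genericity\,\ldots\ actually the natural normalization will make $a_1$ proportional to a single free parameter, $a_2$ proportional to $b$, etc., so that (\ref{cubic}) becomes, up to positive scaling, $b\,z^3 - c\,z^2 + (a-2b)\,z=0$ — a cubic with the root $z=0$ factored out only if one is careful, so more precisely one gets $b\,z^3 + (\text{coeff})\,z^2 + (\text{coeff})\,z - (\text{coeff})$ with the constant term controlled by $a_1$.

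Granting that reduction, the core of the argument is elementary real-algebraic. With the constant term $\neq 0$ (guaranteed by $a_1\neq 0$, the genericity hypothesis, via Remark~\ref{condicionT}), the cubic (\ref{cubic}) has no root at infinity and either one or three real roots. I would compute its discriminant as a polynomial in $a,b,c$ and show that the sign of the discriminant is governed exactly by the inequality $a/b \lessgtr (c/2b)^2 + 2$: the locus $a/b=(c/2b)^2+2$ is the fold where two real roots collide, so $a/b>(c/2b)^2+2$ forces a single real root (case $\mathbf{D_1}$) while $a/b<(c/2b)^2+2$ (together with the sign conditions on $a,b,c$ that keep all three roots real rather than pushing a pair into the complex plane) gives three distinct real roots. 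The remaining dichotomy $\mathbf{D_2}$ versus $\mathbf{D_3}$ is then decided by counting saddles: at each root $z_i$ one substitutes into $\beta_2(z_i)$ and $\beta_3(z_i)$ from (\ref{betas}) and checks the sign of the product $\beta_2(z_i)\beta_3(z_i)$; a saddle occurs when $\beta_2(z_i)\beta_3(z_i)<0$ and a node when $\beta_2(z_i)\beta_3(z_i)>0$. Using the fact that $z_i$ is a root of the cubic, $\beta_2$ and $\beta_3$ simplify along the zero set — one can eliminate the $z_i^2$ terms using (\ref{cubic}) — reducing $\beta_2(z_i)\beta_3(z_i)$ to a rational function of $a,b,c$ evaluated at $z_i$, and a parity/continuity argument (a node must always be flanked by two saddles, since the three signs of the product cannot all be negative when the roots are ordered, by a standard index computation as in \cite{MR724448}) shows that the three-root case splits precisely as claimed: $\mathbf{D_2}$ in the parameter ranges where exactly one $\beta_2\beta_3>0$, and $\mathbf{D_3}$ where all three are $<0$.

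The main obstacle I expect is bookkeeping rather than conceptual: carrying the ambient-geometry normalizations through (\ref{e})--(\ref{g}), the $3$-jet formulas for $j^3\Phi$ in Lemma~\ref{lema:1jetnormal}, and the coefficient identities (\ref{dependence}), (\ref{cubic}), (\ref{betas}) for each of the three hypersurface types, and then confirming that the three computations collapse to the \emph{same} cubic and the same pair of eigenvalue polynomials in $a,b,c$. If that collapse holds — which is the content of the sentence preceding the theorem — the real-algebraic classification is uniform and one only has to do the discriminant/sign analysis once. The delicate points within that analysis are (i) translating the discriminant sign into the stated form $(c/2b)^2+2$ and keeping track of the auxiliary sign conditions ($a>b>0$ versus $a<b<0$, $c>0$, $c\neq0$) that distinguish when all three roots are genuinely real, and (ii) verifying the node-between-two-saddles alternation via the index argument so that $\mathbf{D_2}$ and $\mathbf{D_3}$ are separated correctly; both are routine once the cubic is in hand, so the proof reduces to the explicit per-type jet computations plus a single discriminant-and-eigenvalue bookkeeping pass, after which one reads off the three cases.
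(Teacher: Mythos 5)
Your overall strategy is the paper's: for each of the three types of null rotation hypersurface, compute the $3$-jets of $\Phi$ and the $1$-jet of $\eta$, check that all three cases collapse to the same $1$-jet $by\,dy^2+((a-b)x-cy)\,dx\,dy-by\,dx^2$ and hence to the same cubic and the same eigenvalues $\beta_2,\beta_3$, then classify by the sign of $\beta_2(z_i)\beta_3(z_i)$ at each root. That collapse is exactly what the paper verifies for $\Lambda^3$, the null hyperplane, and the cylinder $\Lambda^2\times\mathbb{R}$.

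However, your second paragraph rests on a premise that contradicts both your own first paragraph and the theorem you are proving. In the paper's normalization (the rotation of the $(x,y)$-plane killing the $x^2y$-term of $g$, together with the specific choices of $f$ for each hypersurface type) one gets $a_1=d+2\epsilon=0$, so the cubic (\ref{cubic}) is $f_1(z)=bz^3-cz^2+(a-2b)z$ with \emph{zero} constant term: $z_0=0$ is always a root, and the entire case structure of the theorem --- the threshold $a/b=(c/2b)^2+2$ --- is the vanishing locus of the discriminant $\Delta=(c/2b)^2-a/b+2$ of the residual quadratic $bz^2-cz+(a-2b)$, not of the full cubic. Your assertion that the constant term is nonzero ``guaranteed by $a_1\neq 0$'' would make the root count depend on the genuine cubic discriminant, which is not of the stated form; and simplicity plus condition (\textbf{T}) are secured here by $b(b-a)\neq 0$, not by $a_1\neq 0$ (with $a_1=0$, inequality (\ref{simple1}) reads $(a-b)^2>0$ and (\ref{simple2}) reads $b^2(a-b)^2>0$). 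The distinguished root $z_0=0$ also carries its own saddle criterion $(a-b)(2b-a)<0$, which is where the sign conditions $a>b>0$ versus $a<b<0$ in case $\mathbf{D_1}$ come from; this step disappears if you discard the zero root. Finally, your parenthetical index argument is garbled: you say the three products $\beta_2\beta_3$ cannot all be negative and in the same sentence define $\mathbf{D_3}$ as the case where all three are negative. The correct statement (and what the paper actually does) is that in the three-root case the explicit formulas (\ref{betas0})--(\ref{betas2}) are evaluated root by root, yielding either three saddles ($\mathbf{D_3}$) or one node between two saddles ($\mathbf{D_2}$); no shortcut index argument is invoked. Fixing the constant-term issue and redoing the sign analysis on the quadratic factor would bring your proof in line with the paper's.
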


\noindent {\em Proof.} We give the proof for each type of null rotation hypersurface separately.

\subsection{Darbouxian umbilics in the light cone $\Lambda^3\subset \mathbb{R}^4_1$}\label{sec:3cono}

In \cite{MR3126943} the authors obtained the coefficients of (\ref{unojetABC}) for a generic spacelike surface $S$ immersed in the light cone $\Lambda^{3}$ of $\mathbb{R}_{1}^{4}$, in terms of parameters of the immersion, and they observed that Darbouxian $\eta$-principal configurations must appear generically. Now, we are going to write the coefficients of (\ref{unojetABC}) in a simpler form than the one given in \cite{MR3126943}. We observe first that, for the 3-light cone
\begin{equation*}
\Lambda^3=\{(t,x,y,z)\in \mathbb{R}_1^4: t^2=x^2+y^2+z^2\}-\{(0,0,0)\},
\end{equation*}
we have $(1,x,y,f(x,y))\in \Lambda^3$ if we choose $f(x,y)=\sqrt{1-x^2-y^2}$ in (\ref{parametrizacionS}) with $(x,y)\in U$, the unit disk in this case. 

The result is the map $\Phi:U\subset \mathbb{R}^2 \rightarrow \Lambda^3\subset \mathbb{R}_1^4$ defined by
\begin{equation}\label{eq:parametrizacionCONO}
\Phi (x,y)=(1+g(x,y))(1,x,y,\sqrt{1-x^2-y^2}),
\end{equation}
where $g$ is a smooth germ of a non constant function  $g:U\subset \mathbb{R}^{2}\rightarrow \mathbb{R}$ as in section \ref{sec:null}. Then we assume $g(0,0)=g_x(0,0)=g_y(0,0)=0$. Besides, we require that $g(x,y)\neq 1$ for all $(x,y)\in U$ because the origin is not a point of $\Lambda^3$. Then, for all $(x,y)\in U$,
\begin{center}
$\det
\left( 
\begin{array}{crl}
E &F \\
F &G \\
\end{array}
\right)
=\dfrac{(1+g(x,y))^4}{1-x^2-y^2}>0.
$
\end{center}
Therefore $\Phi$ is an immersion of $U$ into an spacelike surface $S=\Phi(U)\subset \Lambda^3\subset \mathbb{R}_1^4$. Moreover, using a rotation of the plane $(x,y)$ which eliminates the term $x^2y$, we may write the $3$-jet of $g(x,y)$ around $(0,0)$ as
\begin{equation*}
j^3g(x,y)=\frac{1}{2}(g_{xx}(0,0)x^{2}+2g_{xy}(0,0)xy+g_{yy}(0,0)y^{2})+\frac{a}{6}x^{3}+\frac{b}{2}xy^{2}+\frac{c}{6}y^{3}.
\end{equation*}
If $p=\Phi (0,0)$ is an isolated umbilical point of $S$ with respect to the null normal vector field $\eta$ defined by (\ref{eta}) in section \ref{sec:prelim}, the $\eta$-umbilicity at $p$ implies, by similar calculations to the ones given in the proof of lemma \ref{lema:1jetnormal}, that the second order derivatives of $g(x,y)$ at the origin satisfy
\begin{align*}
g_{xx}(0,0)&=g_{yy}(0,0)=k+1/2, \\
g_{xy}(0,0)&=0,
\end{align*}
where $k$ is the common value of the principal curvatures at $p$. Then we can write the components of the 3-jet of (\ref{eq:parametrizacionCONO}) as
\begin{equation*}
j^3\Phi _1(x,y)=1+\frac{1}{2}(k+1/2)(x^2+y^2)+\frac{a}{6}x^3+\frac{b}{2}xy^2+\frac{c}{6}y^3,
\end{equation*}

\begin{equation*}
j^3\Phi_2(x,y)=x+\frac{1}{2}(k+1/2)(x^3+xy^2),
\end{equation*}

\begin{equation*}
j^3\Phi_3(x,y)=y+\frac{1}{2}(k+1/2)(y^3+yx^2),
\end{equation*}

\begin{equation*}
j^3\Phi_4(x,y)=1+\frac{1}{2}(k-1/2)(x^2+y^2)+\frac{a}{6}x^3+\frac{b}{2}xy^2+\frac{c}{6}y^3,
\end{equation*}
and the 1-jet of the normal field $\eta$ coincides with (\ref{1jetnormal}). The 1-jet of the differential equation (\ref{unojetABC}) becomes
\begin{equation}\label{unojetCONO}
by\,dy^2+((a-b)x-cy)\,dx\,dy-by\,dx^2.
\end{equation}
Now, the isolated $\eta$-umbilical point $p=\Phi(0,0)\in S$ is simple and satisfies the transversality condition {\bf (T)} if $b(b-a)\neq 0$, which we will assume from now on; see remark \ref{condicionT}.

In \cite{MR985996}, theorem 0.1, it was proved that, if the 1-jet of a binary differential equation
\[
A(x,y)dy^2+2B(x,y)dydx-A(x,y)dx^2=0
\]
satisfy certain conditions, then there is a germ of a homeomorphism of the $xy$-plane onto itself taking its integral curves to the integral curves of one of the normal forms of binary differential equations which have local Darbouxian principal configurations $D_1, D_2, D_3$ in a neighborhood of isolated simple umbilical points (named Lemon, Monstar and Star in \cite{MR985996}). The conditions of theorem 0.1 in \cite{MR985996} are expressed in terms of the constants
\[
a_1=A_x(0,0), \;\;\;\; a_2=A_y(0,0),
\]
\[
b_1=B_x(0,0), \;\;\;\; b_2=B_y(0,0), 
\]
and say that the cubic polynomial
\begin{equation}\label{cubicBF}
\phi(z)=a_2 z^3+(2b_2+a_1)z^2+(-a_2+2b_1)z-a_1
\end{equation}
has no repeated roots, $a_1 b_2-a_2 b_1\neq 0$, and that $\phi(z)$ and $a_2 z^2+(a_1+b_2)z+b_1$ have no common roots. All these conditions are satisfied here due to the fact that our cubic polynomial (\ref{cubic}) coincides with $\phi(z)$ for our constants $a_1,a_2,b_1,b_2$, and has no repeated roots in the Darbouxian cases. Besides, our cubic polynomial (\ref{cubic}) and $a_2z^2+(a_1+b_2)z+b_1$ have no common roots if and only if the resultant of both polynomials vanishes (corollary 6, p. 180 of \cite{MR2975988}), which is satisfied if we have condition ({\bf T}) (see remark \ref{condicionT}) and the umbilical point is simple. Condition ({\bf T}) also implies that $a_1 b_2-a_2 b_1\neq 0$. We also observe that our conditions imply that $B^2-AC$ has a Morse singularity at the umbilical point and the hypothesis of proposition 3.2 in \cite{MR1328597} are also satisfied. Then the linear terms of our binary differential equation can be reduced to
\begin{equation}\label{normalform}
y\;dy^2+2(b_1 x+b_2 y)\;dx\;dy-y\;dx^2=0.
\end{equation}
See also theorem 4.1 of \cite{MR1328597} where the topological normal forms of (\ref{normalform}) are the Darbouxian ones and they are determined by the 1-jet of the functions $A(x,y), B(x,y)$. Moreover, the homeomorphism is a diffeomorphism away from the umbilical separatrices.

The corresponding cubic polynomial (\ref{cubic}) becomes 
\begin{equation*}
f_1(z)=b z^3-c z^2+(a-2b)z,
\end{equation*}
whose roots are
\begin{equation}
\begin{array}
[c]{l}
z_0=0,\\ 
\\
z_1=c/2b-\sqrt{(c/2b)^2-a/b+2},\\ 
\\
z_2=c/2b+\sqrt{(c/2b)^2-a/b+2},\\ \label{raices3cono}
\end{array}
\end{equation}
and the non zero eigenvalues (\ref{betas}) of the Lie-Cartan vector field (\ref{XLieCartan}) at each of these roots are
\begin{equation}\label{betas0}
\beta_2(z_0)=a-b, \ \ \ \ \ \ \ \ \ \ \ \ \ \ \ \ \beta_3(z_0)=2b-a,
\end{equation}
\begin{equation}\label{betas1}
\beta_2(z_1)=2b\Delta+a-b-c\sqrt{\Delta},\ \ \ \ \ \ \beta_3(z_1)=-2b\Delta+c\sqrt{\Delta},
\end{equation}
\begin{equation}\label{betas2}
\beta_2(z_2)=2b\Delta+a-b+c\sqrt{\Delta},\ \ \ \ \ \ \beta_3(z_2)=-2b\Delta-c\sqrt{\Delta},
\end{equation}
where
\begin{equation*}
\Delta=(c/2b)^2 - a/b + 2.
\end{equation*} 
. By definition \ref{Darbouxian} this point will be Darbouxian if $b\neq 0$, $a/b\neq 1$ and the discriminant $\Delta$ is different from $0,(c/2b)^2$, that is to say that $a/b\neq 2$ and also $a/b\neq (c/2b)^2+2$. Therefore, by definition \ref{D1D2D3}, the Darbouxian types $D_1,D_2,D_3$ can only exist in four different cases: 

\bigskip

{\bf Case I.} $a/b>(c/2b)^2+2$.

\bigskip

We have $\Delta<0$ and $z_0=0$ is the only real root. The corresponding singularity of $\mathcal{X}$ will be a saddle point if and only if $\beta_2(0)\beta_3(0)<0$. By (\ref{betas0}) this means $(a-b)(2b-a)<0$. Because $a/b>2$ we have Darbouxian type $D_1$ in this case if we also have $a>b>0$ or $a<b<0$.

\bigskip

{\bf Case II.} $2<a/b<(c/2b)^2+2$.

\bigskip

In this case $\Delta>0$ and we have three distinct real roots given by (\ref{raices3cono}).  If $c/b>0$ these roots satisfy $z_0<z_1<z_2$ and if $c/b<0$ then $z_1<z_2<z_0$. Consequently, Darbouxian types can appear in two subcases. For $c/b>0$ the singularities of $\mathcal{X}$ corresponding to the roots $z_0$ and $z_2$ must be of saddle type, while if $c/b<0$ this must occur for the roots $z_1$ and $z_0$.   

\bigskip

By hypothesis $a/b>2$. Then (\ref{betas0}) implies that the root $z_0$ corresponds to a saddle point if $a>b>0$. Then $c/b>0$ if $c>0$. Because $z_0<z_1<z_2$ we must have a saddle point for $z_2$. Using (\ref{betas2}) we have
\begin{equation*}
\beta_2(z_2) \beta_3(z_2)=(b-a)(2b\Delta+c\sqrt{\Delta})-(2b\Delta+c\sqrt{\Delta})^2.
\end{equation*}
Now observe that $2b\Delta \pm c\sqrt{\Delta}$ never vanishes because $a/b\neq 2$. Then $z_2$ corresponds to a saddle point if $b-a<2b\Delta+c\sqrt{\Delta}$. This is satisfied if $a>b>0$ and $c>0$. For $z_1$ we can see from (\ref{betas1}) that it will be of saddle type if $b-a<2b\Delta-c\sqrt{\Delta}$ which is true for $a>b>0$ and $c>0$. Consequently, we have Darbouxian principal configuration $D_3$ in this case if $a>b>0$ and $c>0$. On the other hand, if $a>b>0$ and $c<0$ we also have $D_3$ because $c/b<0$ implies that $z_1<z_2<z_0$ and all three roots correspond to saddle singularities of $\mathcal{X}$ by similar arguments.

\bigskip

If $a<b<0$ and $c>0$ then $c/b<0$ and $z_1<z_2<z_0$. The hypothesis $a/b>2$ and (\ref{betas0}) imply that $z_0$ corresponds to a saddle point. The root $z_1$ also represents a saddle point of 
$\mathcal{X}$ with these inequalities, due to (\ref{betas1}). And using (\ref{betas2}) we can see that $z_2$ corresponds to a node of $\mathcal{X}$. Therefore $a<b<0$ and $c>0$ gives us Darbouxian type $D_2$. 

\bigskip

If $a<b<0$ and $c<0$ then $c/b>0$ and $z_0<z_1<z_2$. The root $z_2$ can not correspond to a saddle singularity of $\mathcal{X}$ with these conditions unless $\Delta$ is negative, which contradicts the hypothesis of Case II. 

\bigskip

{\bf Case III.} $1<a/b<2$.

\bigskip

In this case (\ref{betas0}) implies that the root $z_0$ represents a node singularity of $\mathcal{X}$. Besides, $\Delta>(c/2b)^2$ and $z_1<z_0<z_2$. Using (\ref{betas1}) and (\ref{betas2}) is easy to see that the corresponding singularity for $z_1$ is of saddle type and for $z_2$ is of saddle type whenever $b>0$. Then we have Darbouxian type $D_2$ if $b>0$.

\bigskip

{\bf Case IV.} $a/b<1$.

\bigskip

If $a/b<1$ then $\Delta>(c/2b)^2$. We have two subcases: $c/b>0$ or $c/b<0$. In any case the roots (\ref{raices3cono}) satisfy $z_1<z_0<z_2$. Using (\ref{betas0}) we have that $z_0$ always represents a saddle point of $\mathcal{X}$ because $a/b<2$. Now, using (\ref{betas1}) it  is easy to see that for $c/b>0$ the root $z_1$ corresponds to a saddle of $\mathcal{X}$ if $b>0$. By (\ref{betas2}) the same is true for the root $z_2$. Therefore $b>0$ and $c>0$ implies that the principal configuration is of type $D_3$. If $c/b>0$ and $b<0$ then $c<0$. Using (\ref{betas1}) and (\ref{betas2}) we have that both roots $z_1$ and $z_2$ represent a saddle of $\mathcal{X}$. Consequently, $b<0$ and $c<0$ give us Darbouxian type $D_3$. By similar arguments, we have $D_3$ type if $bc<0$.

\subsection{Darbouxian umbilics in a null hyperplane of $\mathbb{R}^4_1$}\label{sec:planoNULO}

If we choose $f(x,y)\equiv 0$ in (\ref{parametrizacionS}) we obtain a generic spacelike surface $S$ immersed in a null hyperplane $M^3$ of $\mathbb{R}_1^3$ parametrized by 
$\varphi: \mathbb{R}^3\rightarrow M^3\subset \mathbb{R}_1^4$,
\begin{equation}\label{parametrizacionPLANOnulo}
\varphi(x,y,t)=(1,x,y,0)+t\xi,
\end{equation}
where $\xi$ is the null vector $(1,0,0,1)$. The tangent vectors $\varphi_x$, $\varphi_y$ are spacelike and 
$\varphi_t=\xi$. In fact, $M^3$ is a translation by $e_0=(1,0,0,0)$ of the hyperplane of $\mathbb{R}_1^4$ generated by the three linearly independent vectors $e_1=(0,1,0,0)$, $e_2=(0,0,1,0)$ and $\xi$. Now, letting $t=g(x,y)$ in (\ref{parametrizacionPLANOnulo}) as in subsection \ref{sec:3cono}, we obtain a surface $S\subset M^3$ parametrized by
\begin{equation}\label{SenPLANOnulo}
\Phi(x,y)=(1+g(x,y),x,y,g(x,y))
\end{equation}
with $(E G -F^2)(x,y)\equiv 1$. Therefore, $S$ is spacelike. As in lemma \ref{lema:1jetnormal}, the null vector $\eta_0=(-1/2,0,0,1/2)$ is orthogonal to the surface $S$ at $p=\Phi(0,0)$ with $\langle \eta_0, \xi \rangle = 1$ and the umbilicity condition implies that 
\begin{equation*}
e_\eta(0,0)=g_{xx}(0,0)=k, \ \ \ \ f_\eta(0,0)=g_{xy}(0,0)=0, \ \ \ \ g_\eta(0,0)=g_{yy}(0,0)=k,
\end{equation*}
where $k$ is the principal curvature at the isolated $\eta$-umbilical point $p$. Using a rotation on the plane $(x,y)$ which eliminates the term $x^2y$, we may write the $3$-jet of $g(x,y)$ around the origin as
\begin{equation*}
j^3g(x,y)=\dfrac{k}{2}(x^2+y^2)+\dfrac{a}{6}x^3+\dfrac{b}{2}xy^2+\dfrac{c}{6}y^3.
\end{equation*}
Therefore, from (\ref{SenPLANOnulo}),
\begin{equation}\label{3jetSenPLANOnulo}
j^3\Phi(x,y)=(1+j^3g(x,y),x,y,j^3g(x,y)).
\end{equation}
The 1-jet of $\eta$ turns out to be the same given in (\ref{1jetnormal}). With the 3-jet of our parametrization (\ref{3jetSenPLANOnulo}) and the 1-jet of $\eta$, the differential equation (\ref{unojetABC}) becomes exactly the same as the one given in (\ref{unojetCONO}), the cubic polynomial (\ref{cubic}) also coincides, having the same roots (\ref{raices3cono}). Finally, the same eigenvalues (\ref{betas0}), (\ref{betas1}), (\ref{betas2}) are obtained here for the corresponding  Lie-Cartan vector field 
$\mathcal{X}$. Consequently, we have the classification of Darbouxian $\eta$-umbilical points in terms of the parameters $a,b,c$ stated in theorem \ref{teo:Darbouxianos} for a generic spacelike surface $S$ immersed in a generic null hyperplane of $\mathbb{R}_1^4$.

\subsection{Darbouxian umbilics in a cylinder $\Lambda^2 \times \mathbb{R}\subset 
\mathbb{R}_1^4$}\label{sec:cilindro}

Our last type of null rotation hypersurface can be given generically by choosing $f(x,y)=\sqrt{1-x^2}$ in (\ref{parametrizacionM}) which gives us $\xi(x,y)=(1,x,0,\sqrt{1-x^2})$ and the parametrization (\ref{parametrizacionS}) turns out to be $\Phi:(-1,1)\times \mathbb{R}\rightarrow \mathbb{R}_1^4$, defined by
\begin{equation}\label{parametrizacionCILINDRO}
\Phi(x,y)=(1,x,y,\sqrt{1-x^2})+g(x,y)\xi(x,y),
\end{equation}
for a function $g$ as in subsection \ref{sec:3cono}. Observe that the components of $\Phi$ which do not contain the variable $y$ satisfy the equation of the light cone:
\begin{equation*}
\Lambda^2=\{ (t,x,0,z)\in \mathbb{R}_1^4: t^2=x^2+z^2 \}-\{(0,0,0)\}.
\end{equation*}
Then $\Phi$ parametrizes a surface $S$ immersed in a generic cylinder $\Lambda^2 \times \mathbb{R}$. This surface is spacelike because
\begin{equation*}
(E G - F^2)(x,y)=\dfrac{(1+g(x,y))^2}{1-x^2},
\end{equation*}
which is positive for all $(x,y)\in (-1,1) \times \mathbb{R}$. Now, as in previous calculations, imposing the conditions
\begin{equation}\label{condicionesETA}
\langle \eta_0, \xi \rangle =1, \ \ \langle \eta_0, \Phi_x \rangle = \langle \eta_0, \Phi_y \rangle = \langle \eta_0, \eta_0 \rangle =0,
\end{equation}
at the origin $(0,0)$ gives us the null normal vector $\eta_0=(-1/2,0,0,1/2)$. This vector can be extended to the same normal field (\ref{1jetnormal}) by a similar process. The first step is to choose an adequate rotation of the $(x,y)$-plane in order to eliminate some mixed term of the 3-jet of $g(x,y)$. In this case we choose the rotation such that
\begin{equation*}
j^3g(x,y)=\dfrac{1}{2}(g_{xx}(0,0) x^2+2 g_{xy}(0,0) x y + g_{yy}(0,0)) + \dfrac{a}{6}x^3+\dfrac{b}{2}x y^2 + \dfrac{c}{6}y^3.
\end{equation*}
The next step is to impose the umbilicity conditions for $p=\Phi(0,0)$ which gives us the coefficients of the $\eta$-second fundamental form at $(0,0)$ as follows:
\begin{equation*}
e_\eta(0,0)=k, \ \  f_\eta(0,0)=0, \ \ g_\eta(0,0)=k,
\end{equation*}
where $k$ is the $\eta$-principal curvature at the umbilic point. Consequently, 
\begin{equation*}
g_{xx}(0,0)=k+1/2, \ \ g_{xy}(0,0)=0, \ \ g_{yy}(0,0)=k.
\end{equation*}
Then, our parametrization (\ref{parametrizacionCILINDRO}) has 3-jet with the following components:
\begin{eqnarray*}
j^3\Phi_1(x,y)&=&1+\dfrac{1}{2}((k+1/2)x^2+ky^2)+\dfrac{a}{6}x^3+\dfrac{b}{2}xy^2+
\dfrac{c}{6}y^3, \\
j^3\Phi_2(x,y)&=&x+\dfrac{1}{2}(k+1/2)x^3+\dfrac{k}{2}xy^2, \\
j^3\Phi_3(x,y)&=&y, \\
j^3\Phi_4(x,y)&=&1+\dfrac{1}{2}((k-1/2)x^2+ky^2)+\dfrac{a}{6}x^3+\dfrac{b}{2}xy^2+\dfrac{c}{6}y^3,
\end{eqnarray*}
which can be introduced in (\ref{condicionesETA}), replacing $\Phi$ by its 3-jet $j^3\Phi$ and $\eta_0$ by $j^1\eta$ as in lemma \ref{lema:1jetnormal}, to obtain the 1-jet (\ref{1jetnormal}) of $\eta(x,y)$. Then, using this 3-jet of $\Phi$ and the 1-jet of $\eta$ we obtain the functions $A,B,C$ of the differential equation (\ref{ABC}), whose 1-jets turn out to be
\begin{eqnarray*}
j^1A(x,y)&=&by, \\
j^1B(x,y)&=&(a-b)x-cy, \\
j^1C(x,y)&=&-by. 
\end{eqnarray*}
Consequently, as in subsection \ref{sec:planoNULO}, if $b(b-a)\neq 0$ the Darbouxian classification in terms of the parameters $a,b,c$, coincides in this case with the first one. \Qed

\section*{Acknowledgements}

The first author is grateful to CONACYT for the grant 457490 and Facultad de Ciencias UNAM for the warm hospitality during the sabbatical year in which this work was developed, under Project FMAT-2016-0013 of UADY. 

The second author was partially supported by UNAM, under Project PAPIIT-DGAPA IN113516 and also by UADY, under the {\em C\'atedra Dr. Eduardo Urz\'aiz Rodr\'iguez}. 

The third author is grateful to CIMAT-M\'erida for the warm hospitality during the sabbatical year in which this work was developed. He was partially supported by FMAT-UADY, under Project PROFOCIE 2015-12-1918.

\bibliographystyle{plain}

\begin{thebibliography}{99}

\bibitem{MR0947141} {\ V. I. Arnold}, Geometrical Methods in the Theory of Ordinary Differential Equations, Springer-Verlag, New York-Berlin, 1983.

\bibitem{MR2747949} {\ P. Bayard, F. S{\'a}nchez-Bringas}, Geometric invariants and principal configurations on spacelike surfaces immersed in $\Bbb R^{3,1}$, {\it Proc. Roy. Soc. Edinburgh Sect. A}, 140(6) (2010) 1141-1160.

\bibitem{MR1313822} {\ A. Bejancu, K. L. Duggal}, Degenerated hypersurfaces of semi-Riemannian manifolds, {\it Bul. Inst. Politehn. Ia\c si Sec\c t. I}, 37(41) (1-4) (1991) 13-22.

\bibitem{MR2975988} {\ E. Brieskorn, H. Kn$\ddot{o}$rrer}, Plane Algebraic Curves, Birkh$\ddot{a}$user/Springer Basel AG, Basel, 1986.

\bibitem{MR985996} {\ J. W. Bruce, D. L. Fidal}, On binary differential equations and umbilics, {\it Proc. Roy. Soc. Edinburgh Sect. A}, 111 (1-2) (1989) 147-168.

\bibitem{MR1328597} {\ J. W. Bruce, F. Tari}, On binary differential equations, {\it Nonlinearity}, 8 no. 2 (1995) 255-271.

\bibitem{MR1449143} {\ J. W. Bruce, F. Tari}, Implicit differential equations from the singularity theory viewpoint, Singularities and Differential Equations, Banach Center Publ., 33, Polish  Acad. Sci. Inst. Math., Warsaw, 1996.

\bibitem{MR0800916} {\ A. A. Davydov}, Normal form of a differential equation, not solvable for the derivative, in a neighborhood of a singular point, {\it Functional Anal. Appl.}, 19, no. 2 (1985) 81-89.

\bibitem{MR1896darboux} {\ G. Darboux}, {\it Le\c{c}ons sur la Th\'eorie G\'en\'erale des Surfaces. IV}, Gauthiers-Villars, 1896. Sur la forme des lignes de courboure dans le voisinage d' un umbilic.

\bibitem{MR2598375} {\ K. L. Duggal, B. Sahin}, Differential Geometry of Lightlike Submanifolds, Frontiers in Mathematics, Birkh\"auser Verlag, Basel, 2010.

\bibitem{MR961601} {\ V. Gu\'i\~nez}, Positive quadratic differential forms and foliations with singularities on surfaces, {\it Trans. Amer. Math. Soc.}, 309(2) (1988) 477-502.

\bibitem{MR2532372} {\ R. Garcia, J. Sotomayor}, Differential Equations of Classical Geometry, a Qualitative Theory, Publica\c c\~oes Matem\'aticas do IMPA, $27^{\circ}$ Col\'oquio Brasileiro de Matem\'atica, Rio de Janeiro, 2009.

\bibitem{MR1473078} {\ C. Gutierrez, V. Gu\'i\~nez}, Positive quadratic differential forms: linearization, finite determinacy and versal unfolding, {\it Ann. Fac. Sci. Toulouse Math., (6)}, 5(4) (1996) 661-690.

\bibitem{MR0730276}{\ C. Gutierrez, J. Sotomayor}, An approximation theorem for immersions with stable configurations of lines of principal curvature, Geometric dynamics (Rio de Janeiro, 1981), 332-368, Lecture Notes in Math., 1007, Springer, Berlin, 1983.

\bibitem{MR2007065} {\ C. Gutierrez, J. Sotomayor}, Lines of Curvature and Umbilical Points on Surfaces, IMPA, Rio de Janeiro, $18^{\circ}$ Col\'oquio Brasileiro de Matem\'atica 1991. Reprinted as Structurally Stable Configurations of Lines of Curvature and Umbilic Points on Surfaces, Monograf\'ias del IMCA, Lima, 1998.

\bibitem{MR2105778} {\ C. Gutierrez, J. Sotomayor, R. Garcia}, Bifurcations of umbilic points and related principal cycles, {\it J. Dynam. Differential Equations}, 16(2) (2004) 321-346.

\bibitem{MR2673934} {\ S. Izumiya, F. Tari}, Self-adjoint operators on surfaces with singular metrics, {\it J. Dyn. Control Syst.}, 16(3) (2010) 329-353.

\bibitem{MR2080424} {\ S. M. Moraes, M. C. Romero-Fuster, and F. S\'anchez-Bringas}, Principal configurations and umbilicity of submanifolds in $\Bbb R^{N}$, {\it Bull. Belg. Math. Soc. Simon Stevin}, 11(2) (2004) 227-245.

\bibitem{MR3126943} {\ M. Navarro, O. Palmas, D. A. Solis}, On the geometry of null hypersurfaces in Minkowski space, {\it J. Geom. Phys.}, 75 (2014) 199-212.

\bibitem{MR1900745} {\ M. C. Romero-Fuster, F. S\'anchez-Bringas}, Umbilicity of surfaces with orthogonal asymptotic lines in $\Bbb R^4$, {\it Differential Geom. Appl.}, 16(3) (2002) 213-224.

\bibitem{MR1336208} {\ F. S\'anchez-Bringas, A. I. Ram\'\i rez-Galarza}, Lines of curvature near umbilical points on surfaces immersed in $\Bbb R^4$, {\it Ann. Global Anal. Geom.}, 13(2) (1995) 129-140.

\bibitem{MR724448} {\ J. Sotomayor, C. Gutierrez}, Structurally stable configurations of lines of principal curvature, in: Bifurcation, Ergodic Theory and Applications (Dijon, 1981), vol. 98 of {\it Ast\'erisque}, 195-215. Soc. Math. France, Paris, 1982.

\end{thebibliography}

\end{document}